\newcommand{\dic}{\vec{\chi}}
\newtheorem{theorem}{Theorem}
\newtheorem{lemma}[theorem]{Lemma}
\newtheorem{remark}{Remark}
\newtheorem{conjecture}{Conjecture}
\newcommand{\mc}{\mathcal}
\title{Chordal directed graphs are not directed $\chi$-bounded}
\author{Pierre Aboulker$^1$, Nicolas Bousquet$^{2}$, R\'emi de Verclos$^{3}$\\
\small ($1$) DIENS, \'Ecole normale sup\'erieure, CNRS, PSL University, Paris, France. \\
\tt pierreaboulker@gmail.com\\
\small ($2$) Univ. Lyon, Université Lyon 1, LIRIS, UMR CNRS 5205, F-69621, Lyon, France \\
\tt first.last@univ-lyon1.fr \\
\small ($3$) remi.de.joannis.de.verclos@ens-lyon.org
}
\date{}
\begin{document}

\maketitle

\begin{abstract}
We show that digraphs with no transitive tournament on $3$ vertices and in which every induced directed cycle has length $3$ can have arbitrarily large dichromatic number. This answers to the negative  a question of Carbonero,  Hompe,  Moore, and  Spirkl (and strengthens one of their results). 
\end{abstract}

\section{Introduction}

Throughout this paper, we only consider simple
graphs (resp. directed graph) $G$, that is, for every two distinct vertices $u$ and $v$, the graph $G$ does not have multiple edges (resp. both arcs $uv$ and $vu$).

Relations between the chromatic number $\chi(G)$ and the clique number $\omega(G)$ of a graph $G$ have been studied for decades in structural graph theory. In particular, it is well known that there exist triangle-free graphs $G$ with arbitrarily large chromatic number (see e.g.~\cite{BD47,Z49}). A hereditary class of graphs is \emph{$\chi$-bounded} if there exists a function $f$ such that for every $G \in \mathcal{G}$, $\chi(G) \le f(\omega(G))$ (see e.g. a recent survey~\cite{SS20} on the topic). 
The following question received considerable attention in the last few years: Consider a hereditary class of graphs $\mathcal{G}$ in which every triangle-free graph has bounded chromatic number. Is it true that $\mathcal{G}$ is $\chi$-bounded? Carbonero, Hompe, Moore and Spirkl~\cite{carbonero2022counterexample} answered to it by the negative in a recent breakthrough paper.

Their initial motivation  was actually to prove a result on digraphs.
Let $D$ be a digraph. 
A {\it $k$-dicolouring} of $D$ is a $k$-partition $(V_1, \dots , V_k)$ of $V(D)$ such that $ D[V_i]$ is acyclic for every $1\leq i\leq k$. Such a partition is also called an \emph{acyclic colouring} of $D$. 
The {\it dichromatic number} of $D$, denoted by $\dic (D)$ and introduced by Neumann-Lara in~\cite{N82}, is the smallest integer $k$ such that  $D$ admits a $k$-dicolouring. We denote by $\omega(D)$ the size of a largest clique in the underlying graph of $D$. We call \textit{directed triangle} the directed cycle of length $3$.
As for unoriented graphs, we say that a hereditary class of  digraphs $\mathcal G$ is \emph{$\dic$-bounded} if for every $G \in \mathcal G$, $\dic(G) \leq f(\omega(G)$. 

Carbonero, Hompe, Moore and Spirkl~\cite{carbonero2022counterexample} proved that the class of digraphs with no induced directed cycle of odd length at least $5$ is not $\dic$-bounded by giving a collection of digraphs with no induced directed cycle of odd length at least $5$, no $K_4$ and with arbitrarily large dichromatic number. 
They ask (Question 3.2) if the class of digraphs in which every induced directed cycle has length $3$ is $\dic$-bounded. 
 These digraphs can be seen as directed analogues of chordal graphs, where a \textit{chordal directed graph} is a directed graph with no induced directed cycle of length at least~$4$.

We answer negatively to this question (and thus strengthen the construction of~\cite{carbonero2022counterexample}). 
Let us denote by $TT_3$ the \emph{transitive tournament} on $3$ vertices (i.e. the triangle which is oriented acyclically).
Let $\mc C_3$ be the class of digraphs with no $TT_3$  nor induced directed cycle of length at least $4$. 
We prove the following. 

\begin{theorem}\label{thm:main}
 For every $k$, there exists $G \in \mc C_3$ such that $\dic(G) \geq k$.
\end{theorem}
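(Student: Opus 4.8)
The plan is to first replace the dichromatic number by a purely combinatorial quantity, and then to build the required digraphs by induction on $k$.

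\smallskip
\noindent\emph{Reduction to monochromatic directed triangles.} I would first record the following feature of $\mc C_3$: if $D \in \mc C_3$ and $S \subseteq V(D)$ is such that $D[S]$ contains a directed cycle, then $D[S]$ already contains a directed triangle. Indeed, take a shortest directed cycle $C$ in $D[S]$; if its length were at least $4$, then, since $D$ has no induced directed cycle of length $\geq 4$ and no digon, $C$ must have a chord, which is an arc of $D[S]$, and this chord splits $C$ into two strictly shorter directed cycles, one of which lies entirely inside $D[S]$ — contradicting minimality. Hence, for $D \in \mc C_3$, partitioning $V(D)$ into $k$ acyclic parts is the same thing as $k$-colouring $V(D)$ with no monochromatic directed triangle. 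So it is enough to construct, for every $k$, a digraph $D_k \in \mc C_3$ such that every $k$-colouring of $V(D_k)$ contains a monochromatic directed triangle; then $\dic(D_k) \geq k+1$.

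\smallskip
\noindent\emph{A recursive construction.} I would build $D_k$ by induction, keeping the invariant that $D_k \in \mc C_3$, that every $(k-1)$-colouring of $D_k$ contains a monochromatic directed triangle, and that $D_k$ comes equipped with a sparse, well-controlled ``interface'' — an induced subdigraph, in the simplest case just an independent set — whose behaviour under proper dicolourings can be tracked. Starting from $D_2 = C_3$, I would obtain $D_{k+1}$ by taking many disjoint copies of $D_k$ and gluing them along their interfaces by connector gadgets arranged in a Zykov/Ramsey-type pattern, so that in any $k$-colouring of $D_{k+1}$ one of two things happens: either some copy of $D_k$ uses at most $k-1$ colours, and then the induction hypothesis yields a monochromatic directed triangle inside that copy; or every copy uses all $k$ colours, and then the connectors are designed to force a monochromatic directed triangle meeting several copies. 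In both cases $\dic(D_{k+1}) \geq k+1$.

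\smallskip
\noindent\emph{Staying inside $\mc C_3$: the main obstacle.} The delicate point is that the gluing must create neither a $TT_3$ nor an induced directed cycle of length $\geq 4$. Here I would use the elementary characterisation that a digraph is $TT_3$-free precisely when every out-neighbourhood (equivalently every in-neighbourhood) is an independent set. In particular a connector cannot be a complete bipartite join between two copies: two adjacent vertices of one copy sharing an out-neighbour in another copy already form a $TT_3$. Thus each connector has to be a bipartite digraph whose neighbourhoods on both sides are independent sets of the respective copies (and are disjoint from the relevant internal out-neighbourhoods), and one then has to verify that every directed cycle of $D_{k+1}$ either lives inside a single copy — handled by the induction — or inherits a chord from the way copies and connectors fit together. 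The genuine difficulty, and where I expect most of the work to be, is that $TT_3$-freeness forces the connectors to be sparse while the Zykov-type counting needs them to be rich enough to create the extra colour; I would resolve this by ensuring that the copies of $D_k$ are built with large independence number, so that the (necessarily independent) interfaces still carry all $k$ colours in every full colouring, which is exactly what is needed to locate the monochromatic crossing triangle. Equivalently, one may phrase the whole task as producing, for each $k$, a $3$-uniform hypergraph of chromatic number $>k$ whose edges admit a cyclic orientation placing the resulting digraph in $\mc C_3$ — by the reduction above its dichromatic number then equals its hypergraph chromatic number — and the obstacle is the same, since the cheap candidates (complete tripartite, or $3$-partite $3$-graphs) all have chromatic number $2$, so one must sparsify them to become $\mc C_3$-realisable without collapsing the chromatic number.
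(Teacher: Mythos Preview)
Your reduction to ``no monochromatic directed triangle'' is correct and is essentially the content of the paper's Remark~\ref{rem:monoarc}. Your high-level plan --- Zykov-style gluing of copies of $D_k$ with connectors between independent interfaces --- is also the right shape. But the proposal stops exactly where the difficulty begins: you never say what the connector is, and your stated mechanism (``the interfaces carry all $k$ colours, which is what is needed to locate the monochromatic crossing triangle'') does not work. In undirected Zykov the new vertex $x_I$ is blocked because its neighbourhood $I$ exhausts all colours; in the directed setting $x_I$ may share a colour with any neighbour, so a rainbow $I$ forces nothing. Replacing $x_I$ by a directed triangle on three new vertices does not help either, since that triangle can be made monochromatic in a fresh colour without touching $I$. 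So ``monochromatic crossing triangle'' is not the right target, and the proposal as written has a genuine gap.

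The idea you are missing, and which the paper supplies, is the following two-step gadget. First, attach the independent set $I$ not to a vertex but to an \emph{arc} $uv$: add $vx$ and $xu$ for every $x\in I$, so that each $x$ sits on a directed triangle with $uv$. If $u$ and $v$ happen to receive the same colour $\alpha$, then every $x\in I$ must avoid $\alpha$. Second --- and this is the point --- one cannot force a single prescribed arc to be monochromatic, but your own reduction guarantees that \emph{some} arc of any $k$-dicoloured $D\in\mc C_3$ is monochromatic. So instead of one arc per transversal, the paper spends an entire extra copy $G_{m+1}$ of $G$: enumerate its arcs $u_1v_1,\dots,u_mv_m$, take $m$ further copies $G_1,\dots,G_m$ of $G$, and attach the copy $I_i\subseteq G_i$ to the arc $u_iv_i$. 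In any $k$-dicolouring some $u_iv_i$ is monochromatic, hence some $I_i$ misses a colour. This is Lemma~\ref{lem:1indep}; iterating over all Zykov transversals gives Lemma~\ref{lem:lotindep}, and the main theorem follows exactly as in Zykov. The $\mc C_3$-verification is then routine, because the only new triangles are the intended directed ones and any long cycle through $G_i$ must pass through both $u_i$ and $v_i$, picking up the chord $u_iv_i$.
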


Since any orientation of a $K_4$ contains a $TT_3$, it answers Question 3.2 of~\cite{carbonero2022counterexample}.

\section{Proof of Theorem~\ref{thm:main}}\label{sec:proof}

Our proof technique can be seen as a generalization of the construction of triangle-free graphs with arbitrarily large chromatic number due to Zykov~\cite{Z49}.   
Assume that we are given a triangle-free graph $G_k$ with chromatic number at least $k$, and let us define $G_{k+1}$ as follows (note that we can set $G_1$ as a single vertex graph). Let $G$ be the graph made of $k$ disjoint copies of $G_k$. Set $\mathcal I$ to be the set of all $k$-subsets of vertices of $G$  containing exactly one vertex in each copy of $G_k$. 
Now, build the graph $G_{k+1}$ from $G$ as follows: for every set $I \in \mathcal I$, create a new vertex $x_I$ adjacent to every vertex in $I$. The key observation is that, for any colouring of $G_{k+1}$, for each $I \in \mathcal I$, the vertex $x_I$ forces $I$ to miss at least one colour, namely the one received by $x_I$. This easily implies that $G_{k+1}$ is not $k$-colourable. Indeed, if one tries to $k$-colour $G_{k+1}$, since $G_k$ has chromatic number $k$, there must be  a vertex $x_i$ coloured $i$ in the $i^{th}$ copy of $G_k$ for every $i \le k$. A contradiction with the key observation above. Moreover, since each set of $\mc I$ is an independent set, $G_{k+1}$ is triangle-free.

For digraphs, such a naive construction fails since adjacent vertices are allowed to receive the same colour. A way to force a given independent set $I$ of a digraph $D$ to avoid a colour (without creating induced directed cycle of length at least $4$ nor $TT_3$) is to connect each vertex of $I$ to an arc $uv$ (instead of a single vertex as in the directed case) in such a way that each vertex of $I$ forms a directed triangle with $uv$ and then hope that the two  vertices $u$ and $v$ receive the same colour. Unfortunately we cannot force an arc to have both endpoints of the same colour. But we have for a slightly weaker property, namely:

\begin{remark}\label{rem:monoarc}
Let $G \in \mc C_3$ be a directed graph with at least one arc. Any $\dic(G)$-dicolouring of $G$ contains at least one  monochromatic arc. 
\end{remark}

\begin{proof}
The result  trivially holds if $\dic(G) = 1$, so we may assume that $\dic(G) \geq 2$. 
Let $V_1, \dots, V_{\dic(G)}$ be a $\dic(G)$-dicolouring of $G$. The set $V_1 \cup V_2$ must contain an induced directed cycle $C$ since otherwise $G$ would be $(\dic(G)-1)$-colourable. (Indeed, a colouring of the vertices of a digraph is acyclic if and only if none of its induced directed cycle is monochromatic). Hence, by definition of $\mathcal C_3$, $V_1 \cup V_2$ contains a directed triangle, and an arc of this directed triangle must have both endpoints in $V_1$ or both endpoints in $V_2$.
\end{proof}

Let $G$ be a $k$-chromatic digraph and $I$ be an independent set of $G$. Using Remark~\ref{rem:monoarc}, we prove that we can create a graph $G'$ containing many copies of $G$ such that, for every $k$-coloring of $G'$, there is one copy of $G$ in $G'$ where the vertices of $I$ (in that copy) miss at least one color (Lemma~\ref{lem:1indep}). We then extend this result for arbitrarily many independent sets (Lemma~\ref{lem:lotindep}). We then prove Theorem~\ref{thm:main} using Lemma~\ref{lem:lotindep} as in Zykov's construction.

\begin{lemma}\label{lem:1indep}
Let $k$ be an integer. Let $G \in \mc C_3$ with $n$ vertices and $m$ arcs, and such that $\dic(G)=k$. Let $I$ be an independent set of $G$.  
Then there exists a digraph $H \in \mc C_3$ such that $H$ contains $m$ pairwise disjoint  copies $G_1,\ldots,G_m$ of $G$ and satisfy the following: 
 \begin{itemize}
  \item For every $1 \leq i \neq j \leq m$, there is no arc between $G_i$ and $G_j$;
  \item For every $k$-dicolouring of $H$, there exists an index $i \leq m$ and a colour $\alpha$ such that no vertex of the copy of $I$ in $G_i$ is coloured with $\alpha$.
 \end{itemize}
 Moreover $H$ has $n \cdot (m+1) \le n^4$ vertices and at most $m(m+1)+mn^2 \le n^4$ arcs.
\end{lemma}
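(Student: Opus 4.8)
The plan is to imitate Zykov's gadget --- a fresh vertex adjacent to an independent set --- by a gadget built around a single arc, using Remark~\ref{rem:monoarc} to recover the ``this colour is forbidden on $I$'' effect. Take $m+1$ pairwise vertex-disjoint copies $G_0, G_1, \dots, G_m$ of $G$; the copies $G_1, \dots, G_m$ will be the ones promised by the lemma, and $G_0$ is an auxiliary \emph{master} copy. Fix an enumeration $e_1 = u_1 v_1, \dots, e_m = u_m v_m$ of the arcs of $G_0$ and, for each $j \in \{1, \dots, m\}$, attach $e_j$ to the copy $I_j$ of $I$ lying in $G_j$: for every $w \in I_j$, add the two arcs $v_j w$ and $w u_j$. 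Since $u_j v_j$ is already an arc, $\{u_j, v_j, w\}$ now induces a directed triangle. Let $H$ be the resulting digraph. It has $n(m+1)$ vertices, at most $m(m+1)$ arcs inside the copies, and $2|I| \le 2n$ extra arcs for each $j$, hence at most $m(m+1) + 2mn \le m(m+1) + mn^2 \le n^4$ arcs in total, using $m \le \binom{n}{2}$; the bound $n(m+1)\le n^4$ follows the same way. (We may assume $m \ge 1$, otherwise $G$ has no arc and the statement is degenerate.)

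The two bullet points are then short. No arc of $H$ joins two distinct copies among $G_1, \dots, G_m$, since every arc we added has an endpoint ($u_j$ or $v_j$) in $G_0$. For the colouring property, consider any $k$-dicolouring of $H$. Its restriction to $G_0$ is a dicolouring of an induced copy of $G$ with at most $k$ colours, hence a $\dic(G_0)$-dicolouring because $\dic(G_0)=\dic(G) = k$; so Remark~\ref{rem:monoarc} yields an arc $e_j = u_j v_j$ of $G_0$ that is monochromatic, say in colour $\alpha$. Now no $w \in I_j$ can be coloured $\alpha$: otherwise $\{u_j, v_j, w\}$ would be a monochromatic directed triangle, contradicting that the colour class $\alpha$ induces an acyclic subdigraph. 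So $i = j$ and this $\alpha$ work.

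The real content is checking $H \in \mc C_3$, i.e.\ that $H$ has no $TT_3$ and no induced directed cycle of length at least $4$. Each $G_i$ is an induced subdigraph of $H$ isomorphic to $G \in \mc C_3$ (we never add an arc with both endpoints in the same copy), so any offending substructure must use a newly added arc, and every new arc runs between $G_0$ and some single $G_j$, with its $G_j$-endpoint in $I_j$. For an induced directed cycle $C$ using a new arc: $C$ must enter $G_j$ through some arc $v_j w$ and leave it through some arc $w' u_j$, so $C$ contains both $u_j$ and $v_j$; since $u_j v_j$ is an arc of $H$, the cycle is forced to run $\cdots \to w' \to u_j \to v_j \to w \to \cdots$ with the portion from $w$ to $w'$ inside $G_j$ (otherwise $u_j v_j$ is a chord), and then if $w \neq w'$ the arc $v_j w'$ is a chord --- so $C$ is just the directed triangle on $\{u_j, v_j, w\}$, of length $3$. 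For a putative $TT_3$, a short case check on which of its three arcs are new each time forces two of its vertices to both lie in the independent set $I_j$ and to be joined by an arc, a contradiction. I expect this last verification --- especially ruling out long induced cycles that thread several new arcs, and organising the $TT_3$ case analysis cleanly --- to be the main obstacle; the construction, the size bounds and the colouring property are essentially bookkeeping once Remark~\ref{rem:monoarc} is in hand.
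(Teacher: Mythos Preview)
Your construction is exactly the paper's (your master copy $G_0$ is their $G_{m+1}$), and your verification follows the same lines: Remark~\ref{rem:monoarc} on the master copy for the colouring bullet, then a case split on how a putative $TT_3$ or long induced cycle meets $G_0$ versus a single $G_j$. Your chord argument for the induced-cycle case is in fact a touch cleaner than the paper's; the only place your sketch is thin is the $TT_3$ case, where one branch (two vertices in $G_0$, one in $G_j$) yields the directed triangle $\{u_j,v_j,w\}$ rather than two vertices in $I_j$, but that is equally immediate.
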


\begin{proof}
Let us first describe the construction of $H$. We first create $m+1$ pairwise disjoint copies of $G$ denoted by $G_1, \dots, G_m, G_{m+1}$. For every $i \le m$, let $I_i$ be the copy of $I$ in $G_i$. Let us denote $u_1^{m+1}v_1^{m+1}, \dots, u_m^{m+1}v_m^{m+1}$ the arcs of $G_{m+1}$.
We add in $H$ some arcs between the $G_i$ ($i \leq m$) and $G_{m+1}$ as follows.  
For every $i \le m$ and for every vertex $x \in I_i$, add the arcs $v_i^{m+1}x$ and $xu_i^{m+1}$ in $H$. 

Observe that $H$ has $n \cdot (m+1)$ vertices and $m \cdot (m+1) + 2m\cdot |I| \leq n^4$ arcs as announced.

 By construction, for every $1 \leq  i \neq j \leq m$, there is no arc between $G_i$ and $G_j$, so the first bullet holds. 

Let $c$ be a $k$-dicolouring of $H$.  By Remark~\ref{rem:monoarc}, $G_{m+1}$ has a monochromatic arc, say $u_i^{m+1}v_i^{m+1}$. Let $\alpha$ be the colour of $u_i^{m+1}$ and $v_i^{m+1}$ in $c$. Then, for every vertex $x \in I_i$, $x$ is not coloured with $\alpha$ since $H[\{u_i,v_i,x\}]$ is a directed triangle. This proves the second bullet.  

To conclude, we simply have to prove that  $H \in \mc C_3$. 
First assume for contradiction that $H$ contains a copy $X$ of a $TT_3$ as a subgraph. 
Since there is no arc between $G_i$ and $G_j$ for $1 \leq  i \neq j \leq m$, $X$ intersects at most one of the graphs $G_i$ for $i \le m$. Moreover, since $G$ is in $\mc C_3$, $X$ is not included in $G_i$ for $i \le m+1$. So $X$ must intersect $G_{m+1}$ and some $G_i$ for some $i \leq m$. 
Assume first that $X$ contains two vertices of $G_{m+1}$. 
By construction, the only vertices of $G_{m+1}$ connected to $G_i$ are $u_i^{m+1}$ and $v_i^{m+1}$. So both vertices are in $X$. Moreover, the only vertices of $G_i$ connected to $G_{m+1}$ are the vertices of $I_i$ so the third vertex must  be a vertex $x$ of $I_i$. But by construction, $G[\{x,u_i^{m+1},v_i^{m+1}\}]$ is a directed triangle, a contradiction.  
So we can assume that $X$ contains two vertices of $G_i$. Since $X$ is a $TT_3$, they must be adjacent and both be adjacent to a vertex of $G_{m+1}$. But, by construction, the only vertices of $G_i$ connected to $G_{m+1}$ are the vertices of $I_i$ which is an independent set, a contradiction. So $H$ contains no $TT_3$.

Finally, assume for contradiction that $H$ contains a directed cycle $C$ of length at least $4$ as an induced subgraph. 
Since $G \in {\mc C_3}$, $C$ is not contained in $G_i$ for $i=1, \dots, m+1$.  
Since there is no arc between $G_i$ and $G_j$ for $1 \le i \neq j \le m$, the cycle $C$ intersects $G_{m+1}$ and we may assume without loss of generality that $C$ also intersects $G_1$.  
So $C$ contains $u_1^{m+1}$ or $v_1^{m+1}$. Since, by construction, $u_1^{m+1}$ has no out-neighbour in $G_1$ and $v_1^{m+1}$ has no in-neighbour in $G_1$, $C$ must contain both $u_1^{m+1}$ and $v_1^{m+1}$ (since the deletion of $u_1^{m+1}$ and $v_1^{m+1}$ disconnects $G_1$ from the rest of the graph). But now all the vertices of $G_1$ incident to $u_1$ or $v_1$ are the vertices $x$ of $I$. And by construction, for  every $x \in I$, $H[\{u_1^{m+1},v_1^{m+1},x\}]$ is a directed triangle, a contradiction.
\end{proof}

\begin{lemma}\label{lem:lotindep}
Let $k,r$ be two integers. Let $G \in \mc C_3$ such that $\dic(G)=k$ and let $I_1,\ldots,I_r$ be $r$ independent sets of $G$. 
 There exist an integer $\ell_r$ and a digraph $H \in \mc C_3$ such that $H$ contains $\ell_r$ pairwise disjoint   copies $G_1,\ldots,G_{\ell_r}$ of $G$ such that:
 \begin{itemize}
  \item For every $1 \leq i \neq j \leq m$, there is no arc between $G_i$ and $G_j$;
  \item  For every $k$-dicolouring of $H$, there exists an index $ j \le \ell_r$ such that, for every $s \le r$, there exists a colour $\alpha_s$ such that no vertex of the copy of $I_s$ in $G_j$ is coloured with $\alpha_s$.
 \end{itemize}
 Moreover $H$ contains at most $n^{4r}$ vertices and arcs.
\end{lemma}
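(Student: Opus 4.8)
The plan is to prove the lemma by induction on $r$, the base case $r=1$ being exactly Lemma~\ref{lem:1indep} (so that $\ell_1$ is the number $m$ of arcs of $G$ and the size bound reads $n^{4}=n^{4\cdot1}$; throughout we may assume $m\ge1$, the case $k=1$ being degenerate). For the inductive step I would assume the statement for $r-1$ and write $H':=H_{r-1}\in\mc C_3$ for the digraph it provides, with its $\ell':=\ell_{r-1}$ pairwise disjoint copies $G'_1,\dots,G'_{\ell'}$ of $G$, no arcs between any two of them, and at most $n^{4(r-1)}$ vertices and arcs. The guiding idea is that a single monochromatic control arc can be used to force the copy of $I_r$ inside \emph{every} target of a block simultaneously to miss a colour; together with the induction hypothesis applied to that block, this produces one target missing a colour on all of $I_1,\dots,I_r$. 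Concretely I would build $H_r$ thus: take $m$ pairwise disjoint copies $\mc H_1,\dots,\mc H_m$ of $H'$ and one further disjoint control copy $\hat G$ of $G$, with arcs $u_1v_1,\dots,u_mv_m$; for $a\le m$ let $K_a\subseteq V(\mc H_a)$ be the union, over the $\ell'$ target copies of $G$ inside $\mc H_a$, of the copies of $I_r$ they contain (so $K_a$ is an independent set of $\mc H_a$); and for every $a\le m$ and every $x\in K_a$ add the two arcs $v_ax$ and $xu_a$, so that $\{u_a,v_a,x\}$ induces a directed triangle. The $m\ell'=:\ell_r$ copies of $G$ promised by the lemma are the target copies sitting inside the $\mc H_a$'s. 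A key design choice is \emph{not} to feed the large digraph $H'$ into Lemma~\ref{lem:1indep} — that would raise the size to roughly $(n^{4(r-1)})^4$ and break the bound — but to use only the fixed control copy $\hat G$, which still has $\dic(\hat G)=k$, in the role of the control graph of that lemma.

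Granting the construction, everything but the $\mc C_3$ check is short. For the colouring property: given a $k$-dicolouring $c$ of $H_r$, its restriction to $\hat G$ is a $k$-dicolouring of $\hat G$ and so uses exactly $\dic(G)=k$ colours, whence Remark~\ref{rem:monoarc} yields a monochromatic arc $u_av_a$, say of colour $\alpha$; the induced directed triangles then force $c(x)\ne\alpha$ for every $x\in K_a$, i.e.\ the copy of $I_r$ inside every target of $\mc H_a$ misses $\alpha$; and the induction hypothesis applied to $c$ restricted to $\mc H_a\cong H'$ gives a target $G^a_{t^\ast}$ of $\mc H_a$ whose copies of $I_1,\dots,I_{r-1}$ each miss a colour, so $G^a_{t^\ast}$ works for all of $I_1,\dots,I_r$. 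The first bullet holds because the only new arcs run between $\hat G$ and the blocks, and there are no arcs between distinct targets of the same block by the hypothesis on $H'$. For the size, $H_r$ has $m|V(H')|+n\le n^2\cdot n^{4(r-1)}+n\le n^{4r}$ vertices and at most $m\cdot n^{4(r-1)}+n^2+\sum_{a\le m}2|K_a|\le 3n^2\cdot n^{4(r-1)}+n^2\le n^{4r}$ arcs.

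The step I expect to be the main obstacle is verifying that $H_r\in\mc C_3$; the point, however, is that this reduces essentially verbatim to the verification already carried out in Lemma~\ref{lem:1indep}, with each block $\mc H_a$ in the role of a single target copy of $G$ and the independent set $K_a$ in the role of the independent set attached to it. Indeed our construction has exactly the features that proof uses: the blocks are pairwise non-adjacent; each block $\mc H_a$ is joined to $\hat G$ only through the endpoints $u_a,v_a$ of one arc of $\hat G$ and the independent set $K_a$, with $u_a$ having no out-neighbour and $v_a$ no in-neighbour in $\mc H_a$ (so $\{u_a,v_a\}$ is a cut separating $\mc H_a$ from the rest); $\hat G$ and all the $\mc H_a$ already lie in $\mc C_3$; and $\{u_a,v_a,x\}$ induces a directed triangle for each $x\in K_a$. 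Hence the same case analysis applies: a $TT_3$ would lie inside $\hat G\cup\mc H_a$ for a single $a$, be contained in neither piece, and therefore either equal $\{u_a,v_a,x\}$ with $x\in K_a$ (a directed triangle, not a $TT_3$) or contain two adjacent vertices of the independent set $K_a$ — both impossible; and an induced directed cycle of length at least $4$, each of whose maximal runs inside a block is entered from $v_a$ at a vertex of $K_a$ and left towards $u_a$ from a vertex of $K_a$, is forced (the arc $u_av_a$ joins two of its vertices, hence is an edge of it and not a chord, so $C$ meets a single block) to equal $u_a\to v_a\to p_1\to\cdots\to p_s\to u_a$ with $p_s\in K_a$ and $s\ge2$, making $v_ap_s$ a chord — a contradiction. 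The only feature not literally present in Lemma~\ref{lem:1indep} is that the single control arc $u_av_a$ now meets several sub-copies of $G$ at once, namely all targets of $\mc H_a$; but this is harmless, since all the argument uses is that $K_a$ is one independent set of the block $\mc H_a$.
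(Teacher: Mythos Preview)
Your proof is correct, and it takes a genuinely different route from the paper's own argument.

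The paper proceeds by black-boxing Lemma~\ref{lem:1indep}: having built $H_r$, it lets $J$ be the union of the copies of $I_{r+1}$ across all $\ell_r$ target copies and applies Lemma~\ref{lem:1indep} to the pair $(H_r,J)$, obtaining $H_{r+1}$ with $|E(H_r)|$ pairwise non-adjacent copies of $H_r$. This is conceptually cleaner --- no need to reopen the $\mc C_3$ verification, since Lemma~\ref{lem:1indep} already guarantees $H_{r+1}\in\mc C_3$ --- but the size grows by a fourth power at every step: one gets $|V(H_{r+1})|\le |V(H_r)|^4$, hence $|V(H_r)|\le n^{4^{\,r}}$, and the paper's line ``$|V(H_r)|^4=n^{4(r+1)}$'' is in fact erroneous. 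By contrast, your construction keeps the \emph{control} graph equal to the original $G$ throughout and lets only the blocks grow: you take $m=|E(G)|$ copies of $H_{r-1}$ and a single fresh copy $\hat G$ of $G$, wiring the $a$-th block to the $a$-th arc of $\hat G$ through the independent set $K_a$. This is exactly the shape of the gadget in Lemma~\ref{lem:1indep}, but with heterogeneous pieces (small control, large blocks), so the $\mc C_3$ check indeed transfers verbatim as you argue. The payoff is that the size only grows by a factor $O(n^2)$ per step, and your bound $n^{4r}$ genuinely holds; you are thus proving the lemma \emph{as stated}, whereas the paper's construction only yields the qualitative part. The price you pay is having to redo the $TT_3$/induced-cycle analysis rather than quoting Lemma~\ref{lem:1indep} directly, but you handle this correctly: the key points --- that $\{u_a,v_a\}$ separates $\mc H_a$ from the rest, that $u_av_a$ must then be an edge (not a chord) of any induced cycle meeting $\mc H_a$, and that $v_ap_s$ (or symmetrically $p_1u_a$) is a chord once $s\ge 2$ --- are exactly right.
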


\begin{proof}
We now have all the ingredients to prove Lemma~\ref{lem:lotindep} by induction on $r$. By Lemma~\ref{lem:1indep}, the case $r=1$ holds.

 Assume  that the conclusion holds for $r \ge 1$ and let us prove the result for $r+1$. 
Let $G \in \mc C_3$  with $\dic(G)=k$ and let $I_1,\ldots,I_{r+1}$ be $r+1$ independent sets of $G$. 
By induction applied on $G$ and independent sets $I_1, \dots, I_r$, there exists an integer $\ell_r$ and a digraph $H_r \in \mc C_3$ such that $H_r$ contains $\ell_r$ pairwise disjoint  copies $G_1,\ldots,G_{\ell_r}$ of $G$ such that:
 \begin{itemize}
  \item For every $1 \leq i \neq j \leq \ell_r$, there is no arc between $G_i$ and $G_j$;
  \item For every $k$-dicolouring of $H_r$, there exists an index $j \le \ell_r$ such that, for $s=1, \dots, r$, there exists a colour  $\alpha_s$ such that no vertex of the copy of $I_s$ in $G_j$ is coloured with $\alpha_s$.
 \end{itemize}
 
 Note that by induction, %Lemma~\ref{lem:1indep}, 
 $H_r$ has at most $n^{4r}$ vertices and edges.
Let us denote by $J$ the union of the vertices of the copies of  $I_{r+1}$ in the subgraphs $G_1,\ldots,G_{\ell_r}$ and observe that $J$ is an independent set. % which is indeed an independent set.
 By Lemma~\ref{lem:1indep} applied on $H_r$ and $J$, there exists a digraph $H_{r+1} \in \mc C_3$ that contains $m=|E(H_r)|$ pairwise disjoint copies $H^1_r,\ldots,H_r^m$ of $H_r$ such that:
 \begin{itemize}
  \item For $1 \leq i \neq j \leq m$, there is no arc between $H^i_r$ and $H_r^j$;
  \item For every $k$-dicolouring of $H_{r+1}$, there exists an index $j\le m$ and a colour $\alpha_{r+1}$ such that no vertex of the copy of $J$ in $H_r^j$ is coloured with $\alpha_{r+1}$.
 \end{itemize}
 Moreover, $H$ has at most $|V(H_r)|^4 = n^{4(r+1)}$ vertices and arcs. 

Let us prove that $H_{r+1}$ satisfies the conclusion of Lemma~\ref{lem:lotindep}.
For every $i \le m$, $H_r^i$ being a copy of $H_r$, it contains $\ell_r$ copies of $G$, denoted by $G^i_1, \dots, G^i_{\ell_r}$. 
Thus, by construction of $H_{r+1}$, the graph $H_{r+1}$ contains $\ell_{r+1} := m \cdot \ell_r$ induced copies of $G$ and by construction there is no arc linking any of these copies. 

Fix a $k$-dicolouring of $H_{r+1}$. There exists an index $j \le m$ and a colour $\alpha_{r+1}$ such that no vertex of the copy of $J$ in $H_r^j$ is coloured $\alpha_{r+1}$.  
Since $H_r^j$ is a copy of $H_r$ there exists an index $k \le \ell_r$ such that, for $s=1, \dots, r$, there exists a colour $\alpha_s$ such that no vertex of the copy of $I_s$ in $G^j_k$ is coloured with $\alpha_s$. 
Hence, the second bullet holds, which completes the proof.  
\end{proof}

\begin{proof}[Proof of Theorem~\ref{thm:main}]
Let us construct a sequence $(G_k)_{k \in \mathbb N}$ such that for every $k$, $G_k \in \mc C_3$ and $\dic(G_k) \geq k$. Let $G_1$ be the graph reduced to a single vertex and let $G_2$ be the directed triangle. Let $k \geq 2$ and assume that we have obtained a $k$-dichromatic digraph $G_k$ which is in ${\mc C_3}$, let us define $G_{k+1}$ as follows.   
 Let $G$ be the digraph consisting of $k$ disjoint copies of $G_k$, denoted by $G_k^1,\ldots,G_k^k$. 
 Let $\mathcal{I}$ be the set of independent sets that intersect each  $G_k^i$ on a single vertex. 
Since $\dic(G_k) \geq k$, in any $k$-dicolouring of $G$, there exists a vertex $x_i$ coloured $i$ in $G_k^i$ for every $i =1, \dots, k$. By definition of $\mathcal{I}$,  $\{x_1,\ldots,x_k\} \in \mathcal{I}$. Hence, for every $k$-dicolouring of $G$, a set of $\mc I$ receives all the colours. 

 By Lemma~\ref{lem:lotindep} applied on $G$ and $\mc I$, there exists a digraph  $G_{k+1} \in \mc C_3$ such that, for every $k$-dicolouring of $G_{k+1}$ (if such a colouring exists), there exists a copy of $G$ in $G_{k+1}$ such that each set $\mathcal{I}$ in that copy of $G$ avoids a colour, a contradiction.
 So  $\dic(G_{k+1}) \geq k+1$. 
\end{proof}

\section{Further works}

Our $(k+1)$-dichromatic graph has size $n^{2^{poly(n)}}$, which is larger than the graphs obtained using Zykov's construction which have size  of order $2^{poly(|G_k|)}$. It would be interesting to know if the size of our example can be reduced.

One can wonder if directed triangles play a particular role in Theorem~\ref{thm:main}. More formally, one can wonder (as also asked in~\cite{carbonero2022counterexample}, Question 3.3) for which integer $k$, the class of digraphs which only contain induced directed  cycles of length exactly $k$ are $\dic$-bounded. Our main result is that it is not the case for $k=3$. We left the problem open for $k \geq 4$.  

On the same flavour, we recall here the following conjecture of Aboulker, Charbit and  Naserasr which can be seen as a directed analogue of the well-known Gy\'arf\'as-Sumner  conjecture~\cite{G75, S81}. 
An \textit{oriented tree} is an orientation of a tree.

\begin{conjecture}\cite{ACN21}
For every oriented tree $T$, the class of digraphs with no induced $T$ is $\dic$-bounded. 
\end{conjecture}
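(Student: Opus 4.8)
The plan is to adapt, in the directed world, the breadth-first levelling strategy that lies behind the known cases of the Gyárfás--Sumner conjecture, while everywhere replacing the notion ``chromatic number of a neighbourhood'' by a directed counterpart that controls $\dic$. I would fix the clique number $\omega(D)=\omega$ and prove, by induction on the number of vertices of $T$, that there is a function $f_T$ with $\dic(D)\le f_T(\omega)$ for every digraph $D$ with no induced $T$. The single most important bookkeeping point is that the quantity to be bounded is $\dic$, so every argument of the form ``a colour class must avoid some configuration'' has to be phrased in terms of \emph{acyclic} subdigraphs rather than independent sets; throughout one may freely use the easy inequality $\dic(D)\le\chi(\text{underlying graph of }D)$, so that any bound on the degeneracy of the underlying graph immediately bounds $\dic$.

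\textbf{Base cases and a directed levelling.} First I would dispose of the oriented stars. If $T$ is the out-star $S^+_k$ (a centre with $k$ arcs leaving it), then excluding it as an \emph{induced} subdigraph forces every out-neighbourhood $N^+(v)$ to have independence number less than $k$; since its underlying graph also has clique number less than $\omega$, Ramsey's theorem bounds $|N^+(v)|$, the underlying graph is then degenerate, and $\dic(D)$ is bounded. The in-star is symmetric. The genuinely interesting base case is the oriented path, the directed analogue of Gy\'arf\'as's theorem that $P_t$-free graphs are $\chi$-bounded: here I would start from a vertex $v$ lying in a colour class realising $\dic(D)$, run a directed breadth-first search, sort the vertices into levels according to directed distance from $v$, and argue that if some level induces a subdigraph of large $\dic$ then one can thread either a long induced oriented path through consecutive levels or a directed cycle using the level structure, in either case contradicting $T$-freeness.

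\textbf{Inductive step by leaf addition.} For a general oriented tree I would write $T=T'+\ell$, where $\ell$ is a leaf joined to a vertex $p$ of $T'$ by an arc of a fixed orientation. Assuming inductively that $T'$-free digraphs have bounded $\dic$, I take a $T$-free digraph $D$ of large $\dic$, use the levelling argument to extract a vertex $v$ whose out-neighbourhood (or in-neighbourhood, according to the orientation of the arc at $\ell$) still induces a subdigraph of large $\dic$, apply the induction there to find an induced copy of $T'$, and try to choose it so that $p$ maps to a vertex adjacent from $v$ in the correct direction and so that no other vertex of the copy is adjacent to $v$; this would exhibit an induced $T$. Keeping the copy of $T'$ both induced and free of spurious edges to $v$ is exactly the step where the undirected proof relies on a ``clean'' or minimal-counterexample configuration, and it is where all the work concentrates.

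\textbf{Main obstacle.} The decisive difficulty is that there is no clean directed analogue of the lemma ``a graph of large chromatic number contains a vertex whose neighbourhood has large chromatic number'': a large value of $\dic(D)$ need not localise inside a single in- or out-neighbourhood of any vertex, because a directed cycle may use in-arcs and out-arcs of $v$ simultaneously, so the recursion that powers the undirected argument can stall. Moreover the relationship to the classical conjecture is itself subtle --- excluding one orientation $T$ is weaker than excluding the underlying undirected tree, while the parameter $\dic$ is smaller than $\chi$ --- so neither direction transfers for free, and the leaf-addition induction is not even known to close in the undirected case, which is precisely why Gy\'arf\'as--Sumner remains open for general trees. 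I therefore expect a realistic outcome to be a proof for restricted families (oriented stars, oriented paths, and oriented trees built from them by controlled leaf additions), with the statement for arbitrary oriented trees remaining the true obstacle.
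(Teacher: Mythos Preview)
The statement you are attempting to prove is a \emph{conjecture}, not a theorem: the paper states it in the ``Further works'' section as an open problem attributed to \cite{ACN21} and offers no proof. There is therefore nothing in the paper to compare your attempt against.

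On the substance of your sketch: you correctly identify the decisive obstacle yourself. The undirected Gy\'arf\'as--Sumner machinery rests on the lemma that large chromatic number localises in some neighbourhood, and as you note there is no known directed analogue for $\dic$. Your inductive step \textemdash{} find a vertex $v$ whose out- or in-neighbourhood still has large $\dic$, apply induction there, and extend by the leaf \textemdash{} therefore has a genuine gap at exactly the point where it would need to recurse. You also acknowledge that even in the undirected world the leaf-addition induction is not known to close for arbitrary trees, so your scheme inherits an open problem rather than circumventing one. Your final paragraph is an honest assessment: the approach plausibly recovers the known special cases (oriented stars via Ramsey plus degeneracy, oriented paths via a levelling argument), but it does not, and by your own admission is not expected to, settle the conjecture in general. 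That is consistent with the current state of the problem; the conjecture remains open.
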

\medskip

\noindent
{\bf Acknowledgments:} 
This research was supported by ANR project DAGDigDec (JCJC)   ANR-21-CE48-0012.

\end{document}